\newtheorem{theorem}{Theorem}[section]
\newtheorem{lemma}[theorem]{Lemma}
\newtheorem{corollary}[theorem]{Corollary}
\theoremstyle{definition}
\newtheorem{definition}[theorem]{Definition}
\newtheorem{example}[theorem]{Example}
\theoremstyle{remark}
\numberwithin{equation}{section}
\begin{document}

\title[Skew generalized power series rings with the McCoy property]{Skew generalized power series rings with \\ the McCoy property}

\author[P. Danchev]{P. Danchev}
\author[M. Zahiri]{M. Zahiri$^*$}
\author[S. Zahiri]{S. Zahiri}

\address{Peter Danchev, Institute of Mathematics and Informatics, Bulgarian Academy of Sciences, 1113 Sofia, Bulgaria}
\email{\textcolor[rgb]{0.00,0.00,0.84}{pvdanchev@yahoo.com; danchev@math.bas.bg}}
\address{Masoome Zahiri, Department of  Mathematics, Faculty of Sciences, Higher Education center of Eghlid, Eghlid, Iran}
\email{\textcolor[rgb]{0.00,0.00,0.84}{m.zahiri86@gmail.com}}
\address{Saeede Zahiri, Department of  Mathematics, Faculty of Sciences, Higher Education center of Eghlid, Eghlid, Iran}
\email{\textcolor[rgb]{0.00,0.00,0.84}{saeede.zahiri@yahoo.com}}

\thanks{*Corresponding author}

\keywords{Skew generalized power series ring; $(S,\omega)$-McCoy ring; strictly ordered monoid; unique product monoid; abelian ring; semi-regular ring.\\
\indent{2020 Mathematics Subject Classification}: Primary 16D15; Secondary 16D40, 16D70.}

\begin{abstract} Let $R$ be a ring, $(S,\preceq)$ a strictly totally ordered monoid and suppose also
$\omega:S\rightarrow \text{End}(R)$ is a monoid homomorphism. A skew generalized power series ring $R[[S,\omega,\preceq]]$ consists of all functions from a monoid $S$ to a coefficient ring $R$ whose support contains neither infinite descending chains nor infinite anti-chains, equipped with point-wise addition and with multiplication given by convolution twisted by an action $\omega$ of the monoid $S$ on the ring $R$.

Special cases of the skew generalized power series ring construction are the skew polynomial rings, skew Laurent polynomial rings, skew power series rings, skew Laurent series rings, skew monoid rings, skew group rings, skew
Malcev-Neumann series rings and generalized power series rings as well as the untwisted versions of all of these objects.

In the present article, we study the so-termed $(S,\omega)$-McCoy condition on $R$, that is a generalization of the standard McCoy condition from polynomials to skew generalized power series, thus generalizing some of the existing results in the literature relevant to the subject.
\end{abstract}

\maketitle


\section{Introduction and definitions}

Throughout this paper, all monoids and rings are assume to have identity. We use the notations $J(R)$ and $Ni\ell(R)$ to denote the Jacobson radical of a ring $R$ and the set of all nilpotent elements in $R$, respectively.\par

Let $(S,\leq)$ be an ordered set. Then, $(S,\leq)$ is said to be {\it artinian} if every strictly decreasing sequence of elements of $S$ is finite, and $(S,\leq)$ is said to be {\it narrow} if every subset of pair-wise order-incomparable elements of $S$ is finite. Thus, $(S,\leq)$ is artinian and narrow if, and only if, each non-empty subset of $S$ has at least one but only a finite number of minimal elements. Clearly, the union of a finite family of artinian and narrow subsets of an ordered set as well as any subset of an artinian and narrow set are again artinian
and narrow.\par

A monoid $S$ (everywhere written multiplicatively) equipped with an order $\leq$ is called an {\it ordered monoid} if, for any $s_{1},s_{2},t\in S$, the inequality $s_{1}\leq s_{2}$ implies $s_{1}t\leq s_{2}t$ and $ts_{1}\leq ts_{2}$. Moreover, if the inequality $s_{1} < s_{2}$ implies $s_{1}t < s_{2}t$ and $ts_{1} < ts_{2}$, then $(S,\leq)$ is called  {\it strictly ordered}.

Let $R$ be a ring, $(S, \leq)$ a strictly ordered monoid, and suppose $\omega: S\rightarrow \text{End}(R)$ is a monoid homomorphism. For $s \in S$, we put $\omega_{s}$ to denote the image of $s$ under $\omega$, that is, $\omega_{s}=\omega (s)$.

Likewise, let $A$ be the set of all functions $f: S\rightarrow R$ such that the set $$Supp(f)= \{s\in  S : f(s)\neq 0 \}$$ is artinian and narrow. Then, for any $s\in S$ and $f,g \in A $, the set
$$X_{s}( f, g)=\{(x, y) \in Supp(f) \times Supp(g):  s=xy\}$$
is finite. Thus, one can define the product $f g: S\rightarrow R$ of $f, g \in A$ as follows:
$$fg(s)=\sum_{(u,v)\in X_{s}(f,g)}f(u)\omega_{u} (g(v)),$$
(for our convention, the sum over the empty set is $0$). So, with point-wise addition and multiplication as defined above, $A$ becomes a ring called the {\it ring of skew generalized power series} with coefficients in $R$ and exponents in $S$ (one can think of a map $f:S\rightarrow R$ as a formal series $\sum_{s\in S}r_ss,$ where $r_s = f(s)\in R$) and denoted either by $R[[S,\omega,\leq]]$ (or just by $R[[S,\omega ]]$ when there is no ambiguity concerning the order $\leq$) (see, for more details, \cite{MMZ} and \cite{MZ1}).

Historically, the skew generalized power series construction embraces a wide range of classical ring-theoretic
extensions, including skew polynomial rings, skew power series rings, skew Laurent polynomial rings, skew group rings, Malcev-Neumann Laurent series rings, and of course the untwisted versions of all of these.\par

Hereafter, we will use the symbol $1$ to denote the identity elements of the monoid $S$, the ring $R$, and the ring $R[[S,\omega,\leq]]$, as well as the trivial monoid homomorphism $1: S\rightarrow \text{End}(R)$ that sends every element of $S$ to the identity endomorphism.

For each $r\in R$ and $s\in S$, let us define $c_{r},e_{s}\in R[[S,\omega,\leq]]$ by
$$c_{r}\left(x\right) =\left\{
\begin{array}{cc}
r & \text{if}~\; x=1 \\
0 & \text{if}~\; x\in S\backslash \left\{ 1\right\} ,%
\end{array}%
\right. \ e_{s}\left(x\right) =\left\{
\begin{array}{cc}
1 & \text{if}~\; x=s \\
0 &\text{if}~\; x\in S\backslash \left\{ s\right\} .%
\end{array}%
\right.$$

It is clear that $r\rightarrow c_{r}$ is a ring embedding of $R$ into $R[[S,\omega,\leq]]$, and $s\rightarrow e_{s}$ is a monoid embedding of $S$ into the multiplicative monoid of the ring $R[[S,\omega,\leq]]$, such that $e_{s}c_{r}= c_{\omega _{s}(r)}e_{s}$. Moreover, for any non-empty subset $X$ of $R$, we have
$$X[[S,\omega,\leq]]=\left\{f\in R[[S,\omega,\leq]]:~~~ f(s)\in X\cup\{0\}~~~~~ \text{for every}~~~~~ s\in S  \right\},$$

\noindent and, for each non-empty subset $Y$ of $R[[S,\omega,\leq]]$, we put
$$C_{Y}=\{g(t):~~ g\in Y ~~,~~ t\in S\}.$$
\par In \cite{PPN}, Nielsen introduced the McCoy rings. He call a ring $R$ {\it right McCoy} if the equation $f(x)g(x)=0$ forces $f(x)c=0$ for some non-zero $c\in R$, where $f(x),g(x)$ are non-zero polynomials in $R[x]$. {\it Left McCoy} rings are defined similarly. Generally, a ring $R$ is called {\it McCoy} if it is simultaneously left and right McCoy. The interested readers can be referred to \cite{mohamadi, mohamadi1, zah} for a more detailed information on this topic. \par

However, taking into account \cite{DEF}, the McCoy's theorem expectably fails in general for the case of formal power series ring $R[[x]]$ over a commutative ring $R$. In fact, Fields proved in \cite[Theorem 5]{DEF} that, if $R$ is a commutative Noetherian ring with identity in which $(0)=Q_{1}\cap Q_{2}\cap \cdots \cap Q_{n}$ is a shortest primary representation of $(0)$, then $f(x)=\sum_{i=0}^{\infty}a_ix^i\in R[[x]]$ is a zero-divisor in $R[[x]]$ if, and only if, there is a non-zero element $r\in R$ which satisfies $rf(x)=0$. He also provided an example showing that the condition ``$R$ is Noetherian'' is not redundant (cf. \cite[Example 3]{DEF}).\par

In the current work, we study the $(S,\omega)$-McCoy condition on $R$, which is a non-trivial generalization of the classical McCoy condition from polynomials to skew generalized power series. To summarize our achievements, we show that if $R$ is an $S$-compatible abelian semi-regular ring with $J(R)$ nilpotent, then $R$ is $(S,\omega)$-McCoy (see Theorem~\ref{main}). This answers in the affirmative a question posed in \cite{zahiri}.

\section{McCoy rings of skew generalized power series rings}

As usual, a ring $R$ is called {\it semi-regular} if $R/J(R)$ is regular and all idempotents can be lifted modulo $J(R)$. Every semi-regular ring is an exchange ring. Note that the class of semi-regular rings are quite large; indeed, regular rings and semi-perfect rings are known to be semi-regular. Furthermore, it is also principally known that the endomorphism rings of injective modules are semi-regular. For a more deep study of semi-regular
rings and related topics, we refer the interested readers to \cite{Nic1} and \cite{Nic2}.\par

In addition, Yang et al. \cite{YSL} continued to work in this area, introducing the concept of a power series-wise McCoy ring: a ring $R$ is called {\it right power series-wise McCoy} if, for any $f(x),g(x)\in R[[x]]$, there exists a non-zero annihilator $c\in R$ with $f(x)c=0$, satisfying also the equality $f(x)g(x)=0$. \par

\medskip

We now come to our basic notion, mainly motivating our investigation.

\begin{definition}\label{skew McCoy} A ring $R$ is called \textit{right $(S,\omega)$-McCoy} if, whenever $fg=0$ for non-zero elements $f,g\in R[[S,\omega,\leq]]$, then there exists a non-zero element $c\in R$ such that $fc=0$ or, equivalently, $f(s).\omega_{s}(c)=0$ for all $s\in S$. \textit{Left $(S,\omega)$-McCoy} rings are defined by analogy, and a ring $R$ is generally called {\it $(S,\omega)$-McCoy}, provided it is both left and right $(S,\omega)$-McCoy.
\end{definition}


Before stating and proving our chief result, we need a series of preliminary technicalities.

\begin{lemma}\label{un} Let $R$ be an abelian semi-regular ring with $J(R)$ nilpotent. If $\sum_{i=0}^nRa_iR=R$, then $\sum_{i=1}^nRa_i=R$.
\end{lemma}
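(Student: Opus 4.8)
The plan is to pass to the factor ring $\bar{R}=R/J(R)$, prove the analogous statement there, and then lift. Writing $\bar{x}$ for the image of $x\in R$, the hypothesis $\sum_{i}Ra_iR=R$ yields $\sum_i \bar{R}\,\bar{a}_i\,\bar{R}=\bar{R}$. If I can show the \emph{left} ideal identity $\sum_i \bar{R}\,\bar{a}_i=\bar{R}$, then there are $r_i\in R$ with $\sum_i \bar{r}_i\bar{a}_i=\bar{1}$, i.e. $\sum_i r_ia_i=1-j$ for some $j\in J(R)$. Since $j\in J(R)$, the element $1-j$ is a unit of $R$, and multiplying $\sum_i r_ia_i=1-j$ on the left by $(1-j)^{-1}$ places $1$ inside the left ideal $\sum_i Ra_i$; hence $\sum_i Ra_i=R$, as required. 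Thus the whole problem reduces to replacing a two-sided ideal by the corresponding left ideal inside $\bar{R}$.

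The heart of the argument is therefore the structure of $\bar{R}$. Because $R$ is semi-regular, $\bar{R}$ is von Neumann regular and idempotents lift modulo $J(R)$; since $R$ is abelian, each lifted idempotent is central, so every idempotent of $\bar{R}$ is central and $\bar{R}$ is an abelian regular ring. An abelian regular ring is reduced, hence strongly regular. In a strongly regular ring every principal right ideal is generated by a central idempotent and coincides with the corresponding principal left ideal: for $\bar{a}\in\bar{R}$ one has $\bar{a}\bar{R}=\bar{R}\bar{a}=\bar{e}\bar{R}$ with $\bar{e}=\bar{e}^2$ central, whence the two-sided ideal satisfies $\bar{R}\,\bar{a}\,\bar{R}=\bar{e}\bar{R}=\bar{R}\bar{a}$. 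Applying this to each $\bar{a}_i$ gives $\bar{R}\,\bar{a}_i\,\bar{R}=\bar{R}\bar{a}_i$, so summing over $i$ converts $\sum_i \bar{R}\,\bar{a}_i\,\bar{R}=\bar{R}$ into $\sum_i \bar{R}\bar{a}_i=\bar{R}$, exactly the input that the lifting step requires.

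The main obstacle is precisely this passage from a two-sided ideal to a one-sided ideal, which fails for general rings; for instance, in a simple ring a non-central idempotent generates the whole ring as a two-sided ideal yet only a proper left ideal. It is the abelian hypothesis, funneled through strong regularity of $\bar{R}$, that repairs this, so the crux is establishing that $\bar{R}$ is strongly regular and that $\bar{R}\,\bar{a}_i\,\bar{R}=\bar{R}\bar{a}_i$. The other steps — lifting idempotents to conclude that $\bar{R}$ is abelian, and inverting $1-j$ — are routine. I note in passing that this reduction invokes $j\in J(R)$ only to invert $1-j$, so the nilpotency of $J(R)$ is not actually needed for this particular lemma and enters the later development elsewhere.
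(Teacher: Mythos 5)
Your proof is correct, and it takes a genuinely different route from the paper's. The paper argues entirely inside $R$: starting from $\sum_i r_ia_is_i=1$ it absorbs the $a_i$ lying in $J(R)$ into a radical error term, writes each remaining $a_{t_i}$ as an idempotent plus an element of $J(R)$, uses that idempotents are central to move them past the $s_{t_i}$ and so convert $r_{t_i}a_{t_i}s_{t_i}$ into $r_{t_i}s_{t_i}a_{t_i}$ modulo $J(R)$, and finally inverts $1+j'$ via the finite geometric series $1-j'+\cdots+(-1)^{m-1}(j')^{m-1}$, which is the only place where nilpotency of $J(R)$ enters. You instead reduce modulo $J(R)$, observe that lifting idempotents from the abelian ring $R$ forces $\bar R=R/J(R)$ to be abelian regular, hence strongly regular and duo, so that $\bar R\,\bar a\,\bar R=\bar R\bar a$ for every $a$, and then lift the resulting left-ideal relation using that $1-j$ is a unit. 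Your route buys two things. First, it makes transparent that nilpotency of $J(R)$ is superfluous for this lemma (quasi-regularity of the radical suffices), a point the paper's explicit series obscures. Second, it sidesteps the paper's questionable step asserting that every $a_{t_i}\notin J(R)$ in a semi-regular ring decomposes as an idempotent plus an element of $J(R)$: as literally written this fails already for $2\in\mathbb{Z}/9\mathbb{Z}$ (a local, hence semi-regular, abelian ring with nilpotent radical, whose only idempotents are $0$ and $1$), whereas the correct fact, which your argument implicitly uses, is that the idempotent $\bar a\bar x$ arising from $\bar a=\bar a\bar x\bar a$ lifts to a central idempotent. The paper's computation is more elementary in that it never appeals to the structure theory of strongly regular rings, but your argument is both shorter and sounder.
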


\begin{proof} Assume that $r_0a_0s_0+\cdots+r_na_ns_n=1$. We may assume also that $\{a_{t_0},\cdots,a_{t_k}\}$ is a subset of $\{a_0,\cdots,a_n\}$ maximal with respect to $a_{t_i}\not\in J(R)$, $0\leq i\leq k$. Then, we may write $$r_{t_0}a_{t_0}s_{t_0}+\cdots+r_{t_k}a_{t_k}s_{t_k}+j=1,$$ where $j\in J(R)$. As $R$ is a semi-regular ring, there exist idempotents $e_i$ and elements $J_i\in J(R)$ such that $a_{t_i}=e_i+j_i$ for every $0\leq i\leq k$. It follows now that $$r_{t_0}e_0s_{t_0}+\cdots+r_{t_k}e_ks_{t_k}+r_{t_0}j_0s_{t_0}+\cdots+r_{t_k}j_ks_{t_k}+j=1.$$ However, since $R$ is abelian, we obtain $$r_{t_0}s_{t_0}e_0+\cdots +r_{t_k}s_{t_k}e_k+r_{t_0}j_0s_{t_0}+\cdots+r_{t_k}j_ks_{t_k}+j=1.$$ As $e_i=a_{t_i}-j_i$, $0\leq i\leq k$, we can get  $$r_{t_0}s_{t_0}a_{t_0}+\cdots +r_{t_k}s_{t_k}a_{t_k}-r_{t_0}s_{t_0}j_0-\cdots -r_{t_k}s_{t_k}j_k+(r_{t_0}j_0s_{t_0}+\cdots+r_{t_k}j_ks_{t_k}+j)=1.$$ Thus, $$r_{t_0}s_{t_0}a_{t_0}+\cdots +r_{t_k}s_{t_k}a_{t_k}=1+j^{\prime},$$ where $$j^{\prime}=r_{t_0}s_{t_0}j_0+\cdots +r_{t_k}s_{t_k}j_k-(r_{t_0}j_0s_{t_0}+\cdots+r_{t_k}j_ks_{t_k}+j).$$ But $J(R)$ is nilpotent, whence there exists an integer $m$ such that $(j^{\prime})^m=0$. Therefore,  $$(1-j^{\prime}+(j^{\prime})^2+\cdots+(-1)^{m-1}(j^{\prime})^{m-1})(r_{t_0}s_{t_0}a_{t_0}+\cdots +r_{t_k}s_{t_k}a_{t_k})=$$
$$(1-j^{\prime}+(j^{\prime})^2+\cdots+(-1)^{m-1}(j^{\prime})^{m-1})(1+j^{\prime}).$$ It follows now that $$(1-j^{\prime}+(j^{\prime})^2+\cdots+(-1)^{m-1}(j^{\prime})^{m-1})(r_{t_0}s_{t_0}a_{t_0}+\cdots +r_{t_k}s_{t_k}a_{t_k})=1.$$ Consequently, $Ra_{t_0}+\cdots+Ra_{t_k}=R$ and, since $$Ra_{t_0}+\cdots+Ra_{t_k}\subseteq Ra_0+\cdots+Ra_n,$$ we finally extract that $$Ra_0+\cdots+Ra_n=R,$$ as promised.
\end{proof}

Recall that a ring is {\it quasi-duo} if every maximal one-sided ideal is two-sided.

\begin{lemma}\label{2} Abelian semi-regular rings are quasi-duo.
\end{lemma}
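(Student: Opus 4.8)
The plan is to push the problem down to the factor ring $R/J(R)$, where von Neumann regularity becomes available, and then transport the conclusion back up using the fact that $J(R)$ is contained in every maximal one-sided ideal. Write $\pi\colon R\to R/J(R)$ for the canonical projection and set $\bar R=R/J(R)$. The whole argument hinges on showing that $\bar R$ is a \emph{strongly regular} ring, that is, a von Neumann regular ring in which every idempotent is central (equivalently, a reduced regular ring).

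First I would check that $\bar R$ is strongly regular. Regularity of $\bar R$ is immediate from the definition of a semi-regular ring. For the abelian part, take any idempotent $\bar g\in\bar R$; since idempotents lift modulo $J(R)$, there is an idempotent $g\in R$ with $\pi(g)=\bar g$, and because $R$ is abelian the idempotent $g$ is central in $R$, whence its image $\bar g$ is central in $\bar R$. Thus every idempotent of $\bar R$ is central, so $\bar R$ is an abelian regular ring, i.e.\ strongly regular. This lifting-of-idempotents step, which is where both hypotheses (abelian and semi-regular) genuinely enter, is the main delicate point.

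Next I would invoke the well-known fact that a strongly regular ring is \emph{duo}: since $\bar R$ is reduced and regular, for each $a$ the von Neumann relation $a=aba$ produces the central idempotent $e=ab$ with $a\bar R=e\bar R=\bar R e=\bar R a$, so every one-sided ideal of $\bar R$ is two-sided. In particular every maximal one-sided ideal of $\bar R$ is two-sided, i.e.\ $\bar R$ is quasi-duo.

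Finally I would lift the conclusion back to $R$. Let $M$ be any maximal right ideal of $R$. Since $J(R)$ lies inside every maximal right ideal, $J(R)\subseteq M$, so $M/J(R)$ is a maximal right ideal of $\bar R$ and $M=\pi^{-1}(M/J(R))$. By the previous step $M/J(R)$ is two-sided in $\bar R$, and the preimage of a two-sided ideal under a ring homomorphism is again two-sided; hence $M$ is two-sided in $R$. The identical reasoning applies verbatim to maximal left ideals, so every maximal one-sided ideal of $R$ is two-sided, which is exactly the assertion that $R$ is quasi-duo. Everything after the strong-regularity of $\bar R$ is formal; the only real work is that one identification together with correctly invoking the duo property of strongly regular rings.
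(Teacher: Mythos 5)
Your proof is correct, but it takes a genuinely different route from the paper's. The paper works directly inside $R$: it writes a maximal right ideal as $M=\sum_{m\in M,\ m\notin J(R)}mR+J(R)$, replaces each generator $m$ modulo $J(R)$ by a (necessarily central) idempotent $e_m$, and concludes that $M=\sum Re_mR+J(R)$ is two-sided. You instead push everything down to $\bar R=R/J(R)$: regularity of $\bar R$ is part of the definition of semi-regular, centrality of all idempotents of $\bar R$ follows from lifting them to (central) idempotents of the abelian ring $R$, so $\bar R$ is strongly regular and hence duo, and the two-sidedness of $M/J(R)$ is pulled back along $\pi^{-1}$ using $J(R)\subseteq M$. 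Both arguments ultimately rest on the same two ingredients --- idempotent lifting combined with the abelian hypothesis, and the inclusion of $J(R)$ in every maximal one-sided ideal --- but your reduction is structurally cleaner and proves slightly more (every one-sided ideal of $\bar R$ is two-sided, not only the maximal ones), at the price of invoking the external fact that strongly regular rings are duo. It also sidesteps a delicate point in the paper's version: the claim that each $m\in M\setminus J(R)$ can be written as $e_m+j_m$ with $e_m$ idempotent and $j_m\in J(R)$ is not literally true for arbitrary elements (only the weaker identity $mR+J(R)=e_mR+J(R)$ holds in general), whereas your passage to the quotient never needs such a decomposition of individual elements.
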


\begin{proof} Assume that $M$ is a maximal right ideal of $R$. Thus, it must be that $J(R)\subset M$ and $$M=\sum_{m\in M}mR=\sum_{m\in M,\ m\not\in J(R)}mR+\sum_{n\in J(R)}nR=$$
$$\sum_{m\in M,\ m\not\in J(R)}mR+J(R).$$ As $R$ is a simi-regular ring, there exists idempotents $e_m\in R$ and $j_m\in J(R)$ such that $m=e_m+j_m$ for every $m\in M\setminus J(R)$. It, thus, follows that $$M=\sum_{m\in M,\  m\not\in J(R)}e_mR+J(R).$$ As $R$ is abelian, we write $$\sum_{m\in M,\ m\not\in J(R)}e_mR=\sum_{m\in M,\ m\not\in J(R)}Re_m=\sum_{m\in M,\ m\not\in J(R)}Re_mR,$$ and so $$M=\sum_{m\in M,\ m\not\in J(R)}Re_mR+J(R),$$ manifestly implying that $M$ is an ideal of $R$. In a pretty similar way, we can see that any maximal left ideal of $R$ is an ideal of $R$, as required.
\end{proof}

The next claim is an useful observation.

\begin{lemma}\label{3} Every maximal left (resp., right) ideal of a quasi-duo ring is a prime ideal.
\end{lemma}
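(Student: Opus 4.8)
The plan is to leverage the defining property of a quasi-duo ring to upgrade ``maximal one-sided'' to ``maximal two-sided,'' and then to invoke the elementary fact that a maximal two-sided ideal of a ring with identity is prime. I would argue the statement for a maximal left ideal $M$; the right-sided case is entirely symmetric.

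First I would observe that, since $R$ is quasi-duo, the maximal left ideal $M$ is in fact two-sided. Next I would promote this to genuine two-sided maximality: if $I$ is any two-sided ideal with $M\subsetneq I\subseteq R$, then $I$ is in particular a left ideal properly containing $M$, so maximality of $M$ as a left ideal forces $I=R$. Hence $M$ is a maximal \emph{two-sided} ideal.

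To conclude primeness I would use the element-wise criterion: it suffices to show that $aRb\subseteq M$ implies $a\in M$ or $b\in M$. Suppose $a\notin M$. Since $M$ is a maximal two-sided ideal and $a\notin M$, the two-sided ideal $M+RaR$ properly contains $M$, hence equals $R$; thus I can write $1=m+\sum_{i} r_i a s_i$ with $m\in M$ and $r_i,s_i\in R$. Multiplying on the right by $b$ gives $b=mb+\sum_{i} r_i (a s_i b)$, where $mb\in M$ because $M$ is a right ideal, while each $a s_i b\in aRb\subseteq M$, so that $r_i a s_i b\in M$ because $M$ is a left ideal. Therefore $b\in M$, which is exactly what primeness requires.

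As an alternative for the final step, one may argue via quotients: $M$ being a maximal two-sided ideal means $R/M$ is a simple ring with identity, and such a ring is prime, since if $\bar A\,\bar B=0$ for two nonzero ideals $\bar A,\bar B$ of $R/M$ then both must equal $R/M$, forcing $(R/M)^2=R/M\neq 0$, a contradiction; pulling this back yields that $M$ is prime. I do not anticipate a real obstacle here: the only point demanding care is the passage from maximal one-sided to maximal two-sided, which rests squarely on the quasi-duo hypothesis, and everything afterwards is a standard computation.
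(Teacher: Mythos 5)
Your proposal is correct and follows exactly the same route as the paper: use the quasi-duo hypothesis to see that the maximal one-sided ideal is two-sided (hence a maximal two-sided ideal), and then apply the standard fact that maximal ideals are prime. The paper states this in one line, while you have simply filled in the routine details.
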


\begin{proof} It follows from the facts that any maximal left (resp., right) ideal of a quasi-duo ring is two-sided ideal, and any maximal ideal is a prime ideal.
\end{proof}

A direct consequence is the following one.

\begin{corollary}\label{4} Every maximal left (resp., right) ideal of an abelian semi-regular ring is a prime ideal.
\end{corollary}

The following three assertions are pivotal for our major considerations.

\begin{lemma}\label{5} Let $R$ be an abelian semi-regular ring with $J(R)$ nilpotent. Then, $r_R(M)\neq 0$ for every maximal left (resp., right) ideal of $R$.
\end{lemma}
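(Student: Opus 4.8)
The plan is to reduce the problem modulo the Jacobson radical, exploit the regular structure there to manufacture a suitable central idempotent, and then use the nilpotency of $J(R)$ to convert that idempotent into an actual nonzero right annihilator of $M$. Throughout I work with a maximal left ideal $M$, the right-handed case being symmetric. First I would record that, by Lemma~\ref{2}, the ideal $M$ is two-sided and, by Corollary~\ref{4}, it is prime; moreover $J(R)\subseteq M$. Writing $\overline{R}=R/J(R)$ and $\overline{M}=M/J(R)$, the ring $\overline{R}$ is von Neumann regular, and since $R$ (hence $\overline{R}$) is abelian, $\overline{R}$ is in fact strongly regular; thus $\overline{M}$ is a maximal, prime, two-sided ideal of a reduced ring.

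The key step is to produce a nonzero central idempotent $\overline{e}\in\overline{R}$ with $\overline{M}\,\overline{e}=0$. Picking any $a\notin M$, regularity of $\overline{R}$ yields $\overline{b}$ with $\overline{a}\,\overline{b}\,\overline{a}=\overline{a}$, so that $\overline{e}=\overline{a}\,\overline{b}$ is an idempotent (central, since $\overline{R}$ is abelian) with $\overline{e}\notin\overline{M}$, for otherwise $\overline{a}=\overline{e}\,\overline{a}\in\overline{M}$. As $\overline{e}(1-\overline{e})=0\in\overline{M}$ and $\overline{M}$ is prime, we get $1-\overline{e}\in\overline{M}$. The substantive point, on which I expect the whole argument to turn, is to upgrade this to $\overline{M}\,\overline{e}=0$, equivalently $\overline{M}\subseteq\overline{R}(1-\overline{e})$: one wants the maximal ideal $\overline{M}$ of the strongly regular ring $\overline{R}$ to be generated by the central idempotent $1-\overline{e}$, so that every element of $\overline{M}$ annihilates $\overline{e}$ on the right. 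This is the main obstacle, and it is precisely where the abelian-plus-regular hypotheses on $\overline{R}$ must be pushed hardest.

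Granting this, I would lift $\overline{e}$ to an idempotent $e\in R$, using that idempotent lifting modulo $J(R)$ is part of semi-regularity and that $e$ is central because $R$ is abelian. Then $e\notin M$, while $\overline{M}\,\overline{e}=0$ translates into $Me\subseteq J(R)$. Since $e$ is central, this gives $Me\subseteq eJ(R)$ and hence, inductively, $M\,eJ(R)^{i}\subseteq eJ(R)^{i+1}$ for all $i\ge 0$.

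Finally I would invoke nilpotency. As $J(R)^{n}=0$ for some $n$, and $eJ(R)^{0}=eR\ni e\neq 0$, there is a largest $t\ge 0$ with $eJ(R)^{t}\neq 0$. Choosing any nonzero $y\in eJ(R)^{t}$, centrality of $e$ gives $ey=y$ and therefore $My\subseteq (Me)\,J(R)^{t}\subseteq eJ(R)^{t+1}=0$, so that $0\neq y\in r_R(M)$, as desired. When $J(R)=0$ the last step is vacuous, since then $Me=0$ already and $e$ itself lies in $r_R(M)$. The statement for a maximal right ideal follows by the same reasoning with the roles of left and right interchanged.
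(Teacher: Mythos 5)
Your proposal is not a complete proof: the crucial claim is explicitly left unestablished. Everything hinges on the assertion that for the maximal ideal $\overline{M}=M/J(R)$ of the abelian regular ring $\overline{R}=R/J(R)$ one can find a nonzero central idempotent $\overline{e}\notin\overline{M}$ with $\overline{M}\,\overline{e}=0$, i.e.\ $\overline{M}=\overline{R}(1-\overline{e})$. You write ``Granting this'' and move on, but this is precisely the lemma itself in the case $J(R)=0$; the reduction modulo $J(R)$ and the descent via nilpotency (which are carried out correctly, and whose final step $M^{?}e\subseteq eJ(R)^{t+1}=0$ closely parallels the paper's own concluding computation $M^{k}e=0\neq M^{k-1}e$) do all of the easy work and none of the hard work. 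Worse, the granted claim is false for abelian regular rings in general: in $\overline{R}=\prod_{i\geq 1}F_i$ (an infinite product of fields, which is commutative and regular with zero radical), any maximal ideal $\overline{M}$ containing $\bigoplus_{i}F_i$ contains every standard idempotent, so no idempotent $\overline{e}\neq 0$ satisfies $\overline{M}\,\overline{e}=0$; indeed $r_{\overline{R}}(\overline{M})=0$ there. So the gap cannot be closed by ``pushing the abelian-plus-regular hypotheses harder'': the step you isolate simply does not follow from them, and choosing $\overline{e}=\overline{a}\overline{b}$ for a single $a\notin M$ only gives $1-\overline{e}\in\overline{M}$, which is much weaker than $\overline{M}\subseteq\overline{R}(1-\overline{e})$.

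For comparison, the paper's proof takes a different route: it sets $Q=\bigcap_{P\neq M}P$ (intersection of the other maximal left ideals), uses $MQ\subseteq M\cap Q=J(R)$, picks $q\in Q\setminus J(R)$, writes $q=e+j$ with $e$ idempotent and $j\in J(R)$, and deduces $Me\subseteq J(R)$ before running the same nilpotency descent you use. Note that this argument also quietly assumes that such a $q\in Q\setminus J(R)$ exists (equivalently $Q\not\subseteq J(R)$), which is exactly the point where the product-of-fields example above resists; so the difficulty you deferred is a real one and is not fully dispatched in the paper either. In any case, as submitted your argument proves only the reduction steps and must be counted as having a genuine, and in fact unfillable-as-stated, gap at its core.
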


\begin{proof} Assume that $M$ is a maximal left ideal of $R$. Thus, Corollary \ref{4} tells us that $M$ is a prime ideal of $R$. Let $Q$ be the intersection of all maximal left ideals $P$ of $R$ such that $P\neq M$. Hence, $J(R)=M\cap Q$. As $Q$ is an ideal of $R$, we derive the inclusion $MQ\subseteq (M\cap Q)=J(R)$. Letting $q\in Q$ such that $q\not\in J(R)$, we deduce $Mq\subseteq J(R)$. But, since $R$ is semi-regular and $q\not\in J(R)$, there exist a non-zero idempotent $e\in R$ and an element $j\in J(R)$ such that $q=e+j$. It follows now that  $$M(q-j)=Me=eMe\subseteq J(R).$$ As $J(R)$ is nilpotent, there exists an integer $k$ such that $(Me)^k=\{0\}\neq (Me)^{k-1}$. However, $R$ being abelian gives that $M^ke=\{0\}\neq M^{k-1}e$. Consequently, $\{0\}\neq M^{k-1}e\leq r_R(M)$, as wanted.
\end{proof}

\begin{lemma}\label{7} Let $R$ be a ring, $(S,\preceq)$ a strictly totally ordered monoid, and $\omega: S\rightarrow {\rm End}(R)$ a monoid homomorphism. Assume that $R$ is $S$-compatible abelian, semi-regular with $J(R)$ nilpotent.  Then, $ab\in M$ if, and only if, $a\omega_s(b)\in M$ for every $s\in S$ and every maximal right (resp., left) ideal $M$ of $R$.
\end{lemma}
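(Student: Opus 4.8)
The plan is to first pin down the structure of a maximal one-sided ideal $M$, then reduce the asserted equivalence to the single statement $\omega_s^{-1}(M)=M$, and finally prove that statement via an annihilator description of $M$ together with the two-sided form of $S$-compatibility. I will carry out the right-ideal case; the left-ideal case is entirely symmetric, with the roles of the left and right annihilators interchanged.

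First I would record the structural facts about $M$. By Lemma~\ref{2} a maximal right ideal $M$ is two-sided, hence maximal among two-sided ideals (a proper two-sided ideal containing $M$ is in particular a proper right ideal containing $M$), and by Corollary~\ref{4} it is prime. Since $R$ is abelian and idempotents lift modulo $J(R)$, the regular ring $R/J(R)$ is abelian, hence strongly regular and in particular reduced. As $M\supseteq J(R)$, the quotient $R/M$ is a homomorphic image of $R/J(R)$, so it is again strongly regular; being also simple, it has only the trivial idempotents $0$ and $1$, and a regular ring with only these idempotents is a division ring. Consequently $M$ is completely prime: $ab\in M$ if and only if $a\in M$ or $b\in M$.

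With this in hand the lemma reduces to one statement. Indeed, $ab\in M$ iff ($a\in M$ or $b\in M$), and $a\omega_s(b)\in M$ iff ($a\in M$ or $\omega_s(b)\in M$), so it suffices to prove that for all $b\in R$ and $s\in S$,
\[
b\in M \quad\Longleftrightarrow\quad \omega_s(b)\in M,
\]
that is, $\omega_s^{-1}(M)=M$. To this end I would first obtain an annihilator description of $M$. By Lemma~\ref{5} we have $r_R(M)\neq 0$. One checks that $r_R(M)$ is a two-sided ideal (using that $M$ is two-sided), and hence that $l_R\big(r_R(M)\big)$ is a two-sided ideal as well; it contains $M$ by the very definition of $r_R(M)$, and it is proper because $r_R(M)\neq 0$. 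By maximality of $M$ among two-sided ideals we conclude that $M=l_R\big(r_R(M)\big)$, i.e.
\[
b\in M \quad\Longleftrightarrow\quad bc=0 \ \text{ for every } c\in r_R(M).
\]

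The final ingredient is the two-sided form of $S$-compatibility. From $ab=0\Leftrightarrow a\omega_s(b)=0$, taking $a=1$ shows each $\omega_s$ is injective, and then applying the definition to $\omega_s(a)\omega_s(b)=\omega_s(ab)$ yields $ab=0\Leftrightarrow\omega_s(a)b=0$ as well. Thus for every $c\in r_R(M)$ and $b\in R$ we have $bc=0\Leftrightarrow\omega_s(b)c=0$, and combining this with the annihilator description gives
\[
\omega_s(b)\in M \Leftrightarrow \omega_s(b)c=0\ \forall c\in r_R(M) \Leftrightarrow bc=0\ \forall c\in r_R(M) \Leftrightarrow b\in M,
\]
which is precisely $\omega_s^{-1}(M)=M$ and finishes the argument. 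I expect the main obstacle to be the reconstruction $M=l_R\big(r_R(M)\big)$: this is exactly where Lemma~\ref{5} is indispensable, since $r_R(M)\neq 0$ supplies the properness needed to invoke maximality, and one must check carefully that the annihilators in play are genuinely two-sided so that maximality of $M$ as a two-sided ideal can be applied.
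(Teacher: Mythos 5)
Your argument is correct, but it follows a genuinely different route from the paper's. The paper's proof stays at the level of primeness and nilpotence: it forms the intersection $K$ of all maximal right ideals other than $M$, picks $k\in K\setminus M$, notes that $ab\in M$ forces $abRkR\subseteq J(R)$ and hence $(abRkR)^n=0$, transfers this nilpotence through $S$-compatibility to get $(a\omega_s(b)RkR)^n=0$, and then uses $Nil_{*}(R)\subseteq J(R)\subseteq M$ together with the primeness of $M$ (Lemma \ref{3}) and $k\notin M$ to conclude $a\omega_s(b)\in M$. You instead upgrade primeness to complete primeness by showing $R/M$ is a division ring (via $R/J(R)$ being abelian regular, hence strongly regular), which reduces the lemma to the single invariance statement $\omega_s^{-1}(M)=M$; you then prove that invariance by reconstructing $M$ as $l_R\left(r_R(M)\right)$ from Lemma \ref{5} and applying the two-sided form of compatibility. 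Both reductions are sound, and all the intermediate claims you make (that $R/J(R)$ is abelian because idempotents lift and are central in $R$, that $l_R\left(r_R(M)\right)$ is a proper two-sided ideal, that compatibility yields $ab=0\Leftrightarrow\omega_s(a)b=0$) check out. What your version buys is a stronger and more reusable conclusion --- $M$ is $\omega_s$-stable, from which the stated equivalence is immediate --- at the price of invoking Lemma \ref{5}, which the paper's own proof of this lemma does not use; the paper's version works directly with the nilpotence of $J(R)$ but must manage the auxiliary element $k$ and the transfer of nilpotence of long products through $\omega_s$. Note also that the two proofs lean on the same delicate existence claim in different guises: the paper needs some $k\in K\setminus M$, while you need $r_R(M)\neq 0$, and both come from the same construction underlying Lemma \ref{5}.
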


\begin{proof}
Assume that $ab\in M$, where $M$ is a maximal right ideal of $R$. Let $K$ be the intersection of all maximal right ideals of $R$ distinct of $M$. As $R$ is a quasi-duo ring, $K$ is an ideal of $R$ and so $MK\leq M\cap K=J(R)$. Hence, $abK\subseteq J(R)$. So, for every $k\in K$ with $k\not\in M$, we infer $abRkR\subseteq J(R)$. Since $J(R)$ is nilpotent, there exists an integer $n$ such that $(abRkR)^n=\{0\}$. As $R$ is $S$-compatible, we can write $(a\omega_s(b)RkR)^n=\{0\}$ for every $s\in S$. It, therefore, follows that $$a\omega_s(b)RkR\subseteq Nil_{*}(R)\subseteq J(R)\subseteq M.$$ Now, applying Lemma \ref{3}, and bearing in mind that $k\not\in M$, we conclude $a\omega_s(b)\in M$.\\ By similar arguments, we can see that if, for every $s\in S$, $a\omega_s(b)\in M$, then $ab\in M$, as desired.
\end{proof}

\begin{lemma}\label{8} Let $R$ be a ring, $(S,\preceq)$ a strictly totally ordered monoid and $\omega: S\rightarrow {\rm End}(R)$ a monoid homomorphism. Assume that $R$ is $S$-compatible abelian, semi-regular with $J(R)$ nilpotent. Assume also that $f,g\in A$ with $fg=0$, where $g\neq 0$. If, for every maximal right ideal $M$ of $R$, $C_{f}\nsubseteq M$, then $C_g \subseteq J(R)$.
\end{lemma}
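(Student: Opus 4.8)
The plan is to localize the problem at each maximal right ideal: I will show that $C_g\subseteq M$ for \emph{every} maximal right ideal $M$ of $R$, and then conclude $C_g\subseteq\bigcap M=J(R)$, using that the Jacobson radical is the intersection of all maximal right ideals.

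Fix a maximal right ideal $M$. First I would extract the two structural facts that make $R/M$ behave like a coefficient domain. By Lemma~\ref{2} the ring $R$ is quasi-duo, so $M$ is two-sided; hence the right ideals of $D:=R/M$ correspond bijectively to the right ideals of $R$ containing $M$, which by maximality are only $M$ and $R$. Thus $D$ has no nonzero proper right ideal, and a unital ring with this property is a division ring. Next, applying Lemma~\ref{7} with $a=1$ yields $b\in M\iff\omega_s(b)\in M$ for every $s\in S$; in particular $\omega_s(M)\subseteq M$, so $\omega_s$ descends to a unital ring endomorphism $\bar\omega_s$ of $D$. Since $D$ is a division ring and $\bar\omega_s(1)=1$, the kernel of $\bar\omega_s$ is a proper two-sided ideal, forcing $\bar\omega_s$ to be injective.

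With these in hand I would run the standard lowest-coefficient argument, writing $\bar r=r+M$. Because $C_f\nsubseteq M$, the set $\{u\in S:f(u)\notin M\}$ is a nonempty subset of $\mathrm{Supp}(f)$, which is artinian and, as $S$ is totally ordered, well-ordered; let $u_0$ be its least element. Supposing for contradiction that $C_g\nsubseteq M$, choose likewise $v_0=\min\{v\in S:g(v)\notin M\}$, and put $s_0=u_0v_0$. If $(x,y)\in X_{s_0}(f,g)$ satisfies $\bar f(x)\neq 0$ and $\bar g(y)\neq 0$, then $x\succeq u_0$ and $y\succeq v_0$, and strict monotonicity of the order rules out any strict inequality, since it would give $xy\succ u_0v_0=s_0$; hence $(x,y)=(u_0,v_0)$ is the only such pair. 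Reducing the identity $(fg)(s_0)=0$ modulo $M$, every term except the one indexed by $(u_0,v_0)$ vanishes, so that $\bar f(u_0)\,\bar\omega_{u_0}(\bar g(v_0))=0$ in $D$. But $\bar f(u_0)\neq 0$, $\bar g(v_0)\neq 0$, injectivity of $\bar\omega_{u_0}$ gives $\bar\omega_{u_0}(\bar g(v_0))\neq 0$, and $D$ is a domain, a contradiction. Therefore $C_g\subseteq M$, as needed.

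I expect the genuine content to lie in the two reductions of the second paragraph rather than in the final computation: that quasi-duo together with maximality upgrades $R/M$ from a mere simple module to a division ring, and that Lemma~\ref{7} (with $a=1$) is exactly what lets the twist $\omega_s$ descend to $D$. Once $D$ is a domain equipped with an injective induced endomorphism, the lowest-term computation is the routine Malcev--Neumann / generalized-power-series estimate governed solely by the strict total order. The points I would take care to verify are the division-ring claim, namely that every nonzero element of $R/M$ has a right inverse using that $M$ is two-sided, and the existence of the minima $u_0,v_0$ as least elements of nonempty subsets of a well-ordered support.
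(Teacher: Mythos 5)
Your proof is correct, and while it shares the paper's overall skeleton, it closes the argument by a genuinely different and more economical route. Like the paper, you reduce to showing $C_g\subseteq M$ for every maximal right ideal $M$ and then run the lowest-nonvanishing-coefficient computation at $s_0=u_0v_0$, using the strict total order to isolate the single surviving term $f(u_0)\omega_{u_0}(g(v_0))$ modulo $M$. Where you diverge is in extracting the contradiction from $f(u_0)\omega_{u_0}(g(v_0))\in M$. The paper first untwists via Lemma~\ref{7} to get $f(u_0)g(v_0)\in M$, then runs a full idempotent-lifting computation (using semi-regularity and the abelian hypothesis) to insert a copy of $R$ and obtain $f(u_0)Rg(v_0)\subseteq M$, and finally invokes primeness of $M$ (Corollary~\ref{4}). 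You instead observe that, since $R$ is quasi-duo (Lemma~\ref{2}), a maximal right ideal $M$ is two-sided and $R/M$ is a division ring, and that Lemma~\ref{7} with $a=1$ gives $\omega_s(M)\subseteq M$, so the twist descends to an injective endomorphism of $R/M$; the contradiction is then a one-line domain computation. In effect you are using that maximal right ideals of a quasi-duo ring are \emph{completely} prime, which is stronger than the primeness the paper relies on and makes Corollary~\ref{4} and the entire lifting computation unnecessary. Your two flagged verification points (the division-ring claim for $R/M$, and the existence of the minima $u_0,v_0$ in the well-ordered supports) both check out, so the argument is complete as written.
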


\begin{proof} Assume, on the contrary, that there exists a maximal ideal $P$ of $R$ such that $C_g\nsubseteq P$. Set $T:=\{t\in Supp(g)\mid \ g(t)\not\in P\}$. As $S$ is a strictly totally ordered monoid, we can write $$T=\{t_0,t_1,\cdots\},\ \text{where}\ t_i\prec t_{i+1}\ \text{for}\ i=0,1,\ldots.$$ Since $C_f\not\in M$ for every maximal ideal $M$ of $R$, we receive $C_f\nsubseteq P$. Besides, put $W:=\{s\in Supp(f)\mid \ f(s)\not\in P\}$. As $S$ is a strictly totally ordered monoid, we can write $$W=\{s_0,s_1,\cdots\},\ \text{where}\ s_i\prec s_{i+1}\ \text{for}\ i=0,1,\ldots.$$
Note that $f(s)\in P$ and $g(t)\in P$ for every $s,t \in S$ with $s \prec s_0$ and $t\prec t_0$.

Consider now the set $X_{s_0t_0}(f,g)$. If $(s_0,t_0)\neq (s,t)\in X_{s_0t_0}(f,g)$, then we can plainly see that $s\neq s_0$ and $t\neq t_0$. Hence, we get either $s\prec s_0$, $t\succ t_0$ or $s_0\prec s$, $t_0\succ t$.\\
Notice that, for every $s\prec s_0$, we may choose $s\in Supp(f)\backslash W$, so by what we have shown above $f(s)\in P$, and so $f(s)g(t)\in P$ for all $t\in S$. Similarly, if $t\prec t_0$, then $g(t)\in P$ and thus $f(s)g(t)\in P$ for all $s\in S$. Employing Lemma \ref{7}, we detect that $$ \sum_{(s,t)\in X_{s_0t_0},\ s\prec s_0}f(s)\omega_s(g(t))\in P \ \text{and}\ \sum_{(s,t)\in X_{s_0t_0},\ t\prec t_0}f(s)\omega_s(g(t))\in P.\indent (*)$$
So,
$$0=fg(s_0t_0)=f(s_0)\omega_{s_0}(g(t_0))+\sum_{(s,t)\in X_{s_0t_0},\ s\prec s_0}f(s)\omega_s(g(t))+\\$$ $$\sum_{(s,t)\in X_{s_0t_0},\ t\prec t_0}f(s)\omega_s(g(t)).$$ From this and $(*)$, we arrive at  $f(s_0)\omega_{s_0}(g(t_0))\in P$, and usage of Lemma \ref{7} ensures that $f(s_0)g(t_0)\in P$.
As $R$ is a semi-regular ring, there exist  elements $r_0\in R$ and $j_0\in J(R)$ for which $f(s_0)r_0f(s_0)+j_0=f(s_0)$. But, as idempotents lift module $J(R)$ and $r_0f(s_0)+J(R)$ is an idempotent of $R/J(R)$, there exists a central idempotent $e_0=e_0^2$ of $R$ such that $r_0f(s_0)+J(R)=e_0+J(R)$. Hence, there exists $j_1\in J(R)$ such that $r_0f(s_0)+j_1=e_0$ and so $$f(s_0)r_0f(s_0)+f(s_0)j_1=f(s_0)e_0=e_0f(s_0).$$
Furthermore, $f(s_0)g(t_0)\in P$ assures that $r_0f(s_0)g(t_0)\in P$. As $J(R)\subseteq P$, we find that $j_1g(t_0)\in P$, and so $(r_0f(s_0)+j_1)g(t_0)\in P$. Now, it follows at once that $e_0g(t_0)\in P$. However, since $R$ is abelian, it must be that $Re_0=e_0R$, and thus $e_0Rg(t_0)\subseteq P$.

It, thereby, follows that $(r_0f(s_0)+j_1)Rg(t_0)\subseteq P$. As $j_1Rg(t_0)\subseteq P$, we must have $r_0f(s_0)Rg(t_0)\subseteq P$. Multiplying the equation on the left side by $f(s_0)$, we get $f(s_0)r_0f(s_0)Rg(t_0)\subseteq P$. It, thus, follows that $$f(s_0)Rg(t_0)=(f(s_0)r_0f(s_0)+j_0)Rg(t_0)\subseteq P.$$ Consequently, $f(s_0)Rg(t_0)\subseteq P$ and, since $P$ is a prime ideal, we conclude either $f(s_0)\in P$ or $g(t_0)\in P$ leading to an obvious contradiction. Finally, $C_g\subseteq J(R)$, as needed.
\end{proof}

We now have all the machinery necessary to establish the following main statement.

\begin{theorem}\label{main} Let $R$ be an $S$-compatible ring, $(S,\preceq)$ a strictly totally ordered monoid, and $\omega: S\rightarrow {\rm End}(R)$ a monoid homomorphism. If $R$ is abelian and semi-regular with $J(R)$ nilpotent, then $R$ is $(S,\omega)$-McCoy.
\end{theorem}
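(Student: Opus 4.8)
The plan is to establish both one–sided conditions; since every preliminary lemma is stated for maximal right and (``resp.'') left ideals, the left case is entirely symmetric, so I would concentrate on the right $(S,\omega)$-McCoy property. Fix nonzero $f,g$ with $fg=0$. Because $R$ is $S$-compatible, for $c\in R$ one has $f(s)\omega_s(c)=0$ if and only if $f(s)c=0$; hence $fc=0$ is equivalent to $f(s)c=0$ for every $s$, that is, to $c\in r_R(C_f)$. Thus the whole task reduces to producing a nonzero element of $r_R(C_f)$.

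First I would run the natural dichotomy on $C_f$. If $C_f\subseteq M$ for some maximal right ideal $M$, then Lemma~\ref{5} gives $r_R(M)\neq 0$; any nonzero $c\in r_R(M)$ satisfies $f(s)c=0$ for all $s$ (as $f(s)\in M$), so $fc=0$ and we are finished. If instead $C_f\not\subseteq M$ for every maximal right ideal $M$, then the right ideal generated by the coefficients of $f$ is contained in no maximal right ideal and so equals $R$; simultaneously Lemma~\ref{8} yields $C_g\subseteq J(R)$. I would then show that this second alternative cannot occur when $g\neq 0$, which forces us always into the first case.

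Ruling out the second alternative is the heart of the matter, and the main obstacle. Here $\sum_s f(s)R=R$ gives $\sum_s Rf(s)R=R$, so Lemma~\ref{un} upgrades this to $\sum_s Rf(s)=R$; fix finitely many coefficients with $\sum_i u_if(s_i)=1$. This already shows $r_R(C_f)=0$, so the only way to remain consistent with the theorem is to prove $g=0$, contradicting $g\neq 0$. The mechanism I would use is an ascent through the nilpotent filtration $J(R)\supseteq J(R)^2\supseteq\cdots\supseteq J(R)^n=0$. The base step $C_g\subseteq J(R)$ is exactly Lemma~\ref{8}. For the inductive step, assuming $C_g\subseteq J(R)^k$, I would reduce modulo $J(R)^{k+1}$, so that the coefficients of $g$ become values in the $R/J(R)$-bimodule $J(R)^k/J(R)^{k+1}$. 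Since $R$ is abelian and semi-regular, $R/J(R)$ is strongly regular, hence reduced and reversible, and over such a compatible strictly ordered coefficient ring the convolution is Armendariz-type, which forces the coefficient-wise relations $f(s_i)g(t)\in J(R)^{k+1}$ for all $i$ and all $t\in Supp(g)$. Left–multiplying by $u_i$ and summing, $g(t)=\sum_i u_if(s_i)g(t)\in J(R)^{k+1}$, whence $C_g\subseteq J(R)^{k+1}$.

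Iterating, $C_g\subseteq\bigcap_k J(R)^k=0$, so $g=0$, the sought contradiction; consequently the second alternative is vacuous, the dichotomy always resolves in the first case, and Lemma~\ref{5} together with $S$-compatibility delivers the required nonzero $c$ with $fc=0$. The symmetric argument, using the left-hand versions of Lemmas~\ref{5}, \ref{7}, \ref{8} and~\ref{un}, gives left $(S,\omega)$-McCoy, completing the proof. I expect the Armendariz-type transfer at each radical layer — turning the single relation $fg=0$ into the coefficient relations $f(s_i)g(t)\in J(R)^{k+1}$ — to be the step requiring the most care, precisely because the possibly infinite, merely artinian-and-narrow supports of $f$ and $g$ block the classical finite ``leading coefficient'' induction and make the prime/ideal-theoretic input of Lemmas~\ref{7}--\ref{8} indispensable.
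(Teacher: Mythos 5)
Your outer scaffolding coincides with the paper's: the dichotomy over maximal right ideals, the use of Lemma~\ref{5} in the first branch, Lemma~\ref{8} to force $C_g\subseteq J(R)$ in the second, and Lemma~\ref{un} to upgrade $RC_fR=R$ to $\sum_i Rf(s_i)=R$ are all exactly the moves the authors make. The difference, and the problem, is what happens next. The paper's proof does not quote any Armendariz-type property; it spends the bulk of the argument (Claims 1 and 2) on a delicate double induction over the artinian-and-narrow supports of $f$ and $g$, working at the single minimal exponent $k$ with $J(R)^{k+1}g(s)=0$ and proving the annihilation relations $J(R)^kf(s_i)g(t)=f(s_i)J(R)^kg(t)=\{0\}$ by repeatedly writing $f(s_i)=f(s_i)v_if(s_i)+j$, lifting $v_if(s_i)$ to a central idempotent, and using $S$-compatibility at each convolution coefficient. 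Your proposal replaces all of this with the sentence that ``over such a compatible strictly ordered coefficient ring the convolution is Armendariz-type, which forces $f(s_i)g(t)\in J(R)^{k+1}$.'' That assertion \emph{is} the theorem's hard core, and it is not a quotable fact in the generality you need: after reducing modulo $J(R)^{k+1}$ you are no longer looking at a ring equation over the reduced ring $R/J(R)$ but at a relation $\bar f\bar g=0$ where $\bar g$ takes values in the bimodule $J(R)^k/J(R)^{k+1}$. The standard proof that reduced rings are power-serieswise Armendariz uses reversibility ($ab=0\Rightarrow ba=0$), which has no meaning for a module element; one would have to substitute the central-idempotent decomposition available in a strongly regular ring and then run a minimal-support induction over $S$ -- in other words, reconstruct essentially the computation the paper carries out. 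Until that transfer is actually proved, the proof is incomplete.

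Two smaller points also need attention. First, passing to $(R/J(R)^{k+1})[[S,\omega]]$ and to $(J(R)^k/J(R)^{k+1})[[S,\omega]]$ presupposes that each $\omega_s$ induces well-defined maps there, i.e.\ that $\omega_s(J(R)^m)\subseteq J(R)^m$; this is plausible for $S$-compatible endomorphisms with $J(R)$ nilpotent but must be justified (the paper avoids the issue by never forming these quotients, instead manipulating the elements $J(R)^kf(s_i)\omega_s(g(t))$ directly and invoking compatibility only to pass between $ab=0$ and $a\omega_s(b)=0$). Second, your induction needs the coefficientwise conclusion for \emph{all} $s\in Supp(f)$, or at least for the finitely many $s_i$ with $\sum_iu_if(s_i)=1$; since $Supp(f)$ may be infinite, the ``finite leading coefficient'' inductions from the polynomial literature do not apply verbatim, which is precisely why the paper organizes its induction around the well-ordered enumerations $s_0\prec s_1\prec\cdots$ and $t_0\prec t_1\prec\cdots$ of the supports. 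Your plan is a reasonable and arguably cleaner reorganization of the same ideas, but as written it has a genuine gap where the paper has its longest and most technical argument.
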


\begin{proof} We prove only the right case, because the proof of the left case is quite similar. \\

To this target, set $A:=R[[S,\omega,\preceq]]$ and also choose $f,g\in A$ with $fg=0$, where $g\neq 0$. Put also $C_{f} := \{f(t) :\  t \in Supp(f)\}$. Thus, the proof will be complete if we succeed to show that there exists a maximal right ideal $M$ of $R$ such that $C_f\subseteq M$, because, if this is the case, then invoking Lemma \ref{5}, we shall deduce that $r_R(C_f)\neq 0$ and so $f(s).\omega_s(r)=0$ for each $s\in Supp(f)$, where $0\neq r\in r_R(C_f)$.\\\par

Assume, on the contrary, that $C_f\nsubseteq M$ for each maximal right ideal $M$ of $R$. Then, appealing to Lemma \ref{8}, $C_g\subseteq J(R)$. So, since $J(R)$ is nilpotent and $C_g\subseteq J(R)$, there exists an integer $k$ minimal with respect to the condition that $$J(R)^{k+1} g(s)=\{0\}\ \text{ for every}\ s\in S. \indent (*_1)$$ \\ As $C_f\nsubseteq M$ for every maximal ideal $M$ of $R$, we claim that $RC_fR=R$ as for otherwise, in conjunction the fact that any proper ideal is contained in at least a maximal right ideal, we get a contradiction. So, $RC_fR=R$ indeed, and thus there exists a finite subset $\{s_0,s_1,\cdots ,s_m\}\subseteq Supp(f)$, where $s_i\prec s_{i+1}\ \text{for}\ i=0,1,\ldots,m$ and $f(s)=0$ for every $s\prec  s_0$. Therefore, $$\{ s_0,s_1,\cdots ,s_m\}=\{s\in Supp(f)\mid s\prec s_m\}\bigcup \{s_m\}\indent (*_2)$$ such that $ r_0f(s_0)l_0+\cdots+r_mf(s_m)l_m=1$, where $r_i,l_i\in R$, $0\leq i\leq m$. That is why, we infer $\sum_{i=0}^mRf(s_i)R=R$. However, Lemma \ref{un} works to see that $\sum_{i=0}^mRf(s_i)=R$, and so we can get $$r_R(\{f(s_0),\cdots,f(s_m)\})=0.\indent \indent\indent \indent\indent \indent (*_3)$$ From this and $(*_1)$, we obtain that there exists $t_0\in Supp(g)$ such that $$J(R)^kg(t_0)\neq \{0\}=J(R)^kg(t^{\prime}) \ \text{for every}\ t^{\prime}\prec t_0.\indent (*_4)$$

We now differ two basic cases:

\medskip

\textbf{Claim 1:} We assert that $J(R)^kf(s_0)g(t)=f(s_0)J(R)^kg(t)=\{0\}$ for each $t\in Supp(g)$ proving it by  induction. In fact, as $fg(s_0t_0)=0$ in  $R$, then $$\{0\}=J(R)^{k}(fg(s_0t_0) )=J(R)^kf(s_0)\omega_{s_0}(g(t_0))+$$$$J(R)^k\sum_{(s,t)\in X_{s_0t_0},\ s\prec s_0}f(s)\omega_s(g(t))+J(R)^k\sum_{(s,t)\in X_{s_0t_0},\ s_0\prec s}f(s)\omega_s(g(t)).$$ Moreover, note that, if $(s,t)\in X_{s_0t_0}$ with $s_0\prec s$, then we must have $t\prec t_0$, because if $t_0\prec t$ it must be that $s_0t_0\prec st$ -- a contradiction. Analogously, if $(s,t)\in X_{s_0t_0}$ with $t\prec t_0$, then we must have $s_0\prec s$. Thus, we receive
$$\sum_{(s,t)\in X_{s_0t_0},\ s_0\prec s}f(s)\omega_s(g(t))=\sum_{(s,t)\in X_{s_0t_0},\ t\prec t_0}f(s)\omega_s(g(t))=\{0\}$$ and, therefore, $$\{0\}=J(R)^{k}(fg(s_0t_0) )=J(R)^kf(s_0)\omega_{s_0}(g(t_0))+$$$$J(R)^k\sum_{(s,t)\in X_{s_0t_0},\ s\prec s_0}f(s)\omega_s(g(t))+J(R)^k\sum_{(s,t)\in X_{s_0t_0},\ t\prec t_0}f(s)\omega_s(g(t))$$
$$=J(R)^kf(s_0)\omega_{s_0}(g(t_0)).$$ But, since by assumption $R$ is $S$-compatible, we conclude $$J(R)^kf(s_0)g(t_0)=\{0\}.$$
Next, we manage to show that $f(s_0)J(R)^kg(t_0)=\{0\}$. To that purpose, if $f(s_0)\in J(R)$, then under presence of $(*_1)$ we arrive at $f(s_0)J^k(R)g(t_0)=\{0\}$. If $f(s_0)\not\in J(R)$, then since $R$ is semi-regular ring, there exists $v_0\in R$ such that $f(s_0)+J(R)=f(s_0)v_0f(s_0)+J(R)$. As all idempotents lift modulo $J(R)$ and $v_0f(s_0)+J(R)$ is an idempotent of the factor-ring $R/J(R)$, there exists a central idempotent $f_0=f_0^2$ of $R$ such that $v_0f(s_0)+u_0=f_0$, where $u_0\in J(R)$.\\

Furthermore, as $J(R)^kv_0\subseteq J^{k}(R)$ and $J(R)^kf(s_0)g(t_0)=\{0\}$, we obtain $J(R)^kv_0f(s_0)g(t_0)=\{0\}$. It now follows that $J(R)^k(f_0-u_0)g(t_0)=\{0\}$. Also, since $u_0\in J(R)$, utilizing $(*_1)$ we have $J(R)^ku_0g(t_0)=\{0\}$. Consequently, we infer that $J(R)^kf_0g(t_0)=\{0\}$. Thus, as $f_0$ is a central idempotent of $R$, we get $f_0J(R)^kg(t_0)\{0\}$. So, $(v_0f(s_0)+u_0)J(R)^kg(t_0)=\{0\}$ and since, by $(*_1)$, $u_0J(R)^kg(t_0)=\{0\}$, we detect that $v_0f(s_0)J(R)^kg(t_0)=0$. Multiplying the equality on the left side by $f(s_0)$, we can write $f(s_0)v_0f(s_0)J(R)^kg(t_0)=\{0\}$. From this and $(*_1)$, we have  $(f(s_0)v_0f(s_0)+J(R))J(R)^kg(t_0)=\{0\}$. However, since $f(s_0)v_0f(s_0)+J(R)=f(s_0)+J(R)$, we get $(f(s_0)+J(R))J(R)^kg(t_0)=\{0\}$. But, owing to $(*_1)$, we must have $$f(s_0)J(R)^kg(t_0)=\{0\}.\indent\indent \indent \indent\indent \indent \indent (*_5)$$
Hence, the base of the induction hypothesis is true.

Now, assume that $$J(R)^kf(s_0)g(t_i)=\{0\}=f(s_0)J(R)^kg(t_i)=\{0\}$$ for $0\leq i\leq n$, where $Supp(g)=\{t_0,t_1,t_2,\cdots\}$ with $t_0\prec t_1\prec \cdots$. As $fg(s_0t_{n+1})=0$ in $R$, then $$\{0\}=f(s_0)J(R)^{k}(fg(s_0t_{n+1}) )=f(s_0)J(R)^kf(s_0)\omega_{s_0}(g(t_{n+1}))+$$$$f(s_0)J(R)^k\sum_{(s,t)\in X_{s_0t_{n+1}},\ s\prec s_0}f(s)\omega_s(g(t))+$$$$f(s_0)J(R)^k\sum_{(s,t)\in X_{s_0t_{n+1}},\ t\prec t_{n+1}}f(s)\omega_s(g(t)).$$ And since $R$ is $S$-compatible and $$J(R)^kf(s_0)g(t_i)=\{0\}=f(s_0)J(R)^kg(t_i)$$ for $0\leq i\leq n$, it follows from $J(R)^kf(s)\subseteq J(R)^k$ that $$f(s_0)J(R)^k\sum_{(s,t)\in X_{s_0t_{n+1}},\ t\prec t_{n+1}}f(s)\omega_s(g(t))=\{0\},$$ and from $f(s)=0$ for $s\prec s_0$, we have $$f(s_0)J(R)^k\sum_{(s,t)\in X_{s_0t_{n+1}},\ s\prec s_0}f(s)\omega_s(g(t))=\{0\}.$$ It, therefore, follows that $$f(s_0)J(R)^kf(s_0)\omega_{s_0}(g(t_{n+1}))=\{0\}.$$ But $R$ being $S$-compatible insures that $$f(s_0)J(R)^kf(s_0) g(t_{n+1})=\{0\}.$$

Next, we would like to show that $f(s_0)J(R)^kg(t_{n+1})=\{0\}$. To this aim, if $f(s_0)\in J(R)$, then by $(*_1)$ we derive $f(s_0)J^k(R)g(t_{n+1})=\{0\}$. If $f(s_0)\not\in J(R)$, then since $R$ is a semi-regular ring, there exists $v_0\in R$ such that $f(s_0)+J(R)=f(s_0)v_0f(s_0)+J(R)$. As all idempotents lift modulo $J(R)$ and $v_0f(s_0)+J(R)$ is an idempotent of the quotient-ring $R/J(R)$, there exists a central idempotent $f_0=f_0^2$ of $R$ such that $v_0f(s_0)+u_0=f_0$, where $u_0\in J(R)$.\\
Likewise, as $J(R)^kv_0\subseteq J^{k}(R)$, we write $v_0f(s_0)J(R)^kv_0f(s_0)g(t_{n+1})=\{0\}$. It, thereby, follows that $(f_0-u_0)J(R)^k(f_0-u_0)g(t_{n+1})=\{0\}$. As $u_0\in J(R)$, having in mind $(*_1)$, we get $(f_0-u_0)J(R)^ku_0g(t_{n+1})=\{0\}$. Consequently, we obtain that $(f_0-u_0)J(R)^kf_0g(t_{n+1})=\{0\}$. As $u_0\in J(R)$, taking into account $(*_1)$, we get $u_0J(R)^kf_0g(t_{n+1})=\{0\}$. So, $f_0J(R)^kf_0g(t_{n+1})=\{0\}$. As $f_0$ is a central idempotent of $R$, we have $f_0J(R)^kg(t_{n+1})=\{0\}$. Hence, $(v_0f(s_0)+u_0)J(R)^kg(t_{n+1})=\{0\}$ and, since by $(*_1)$, $u_0J(R)^kg(t_{n+1})=\{0\}$, we observe that $v_0f(s_0)J(R)^kg(t_{n+1})=\{0\}$. Multiplying the equality on the left side by $f(s_0)$, we get $f(s_0)v_0f(s_0)J(R)^kg(t_{n+1})=\{0\}$. From this and $(*_1)$, we have $(f(s_0)v_0f(s_0)+J(R))J(R)^kg(t_{n+1})=\{0\}$. As $f(s_0)v_0f(s_0)+J(R)=f(s_0)+J(R)$, we deduce $(f(s_0)+J(R))J(R)^kg(t_{n+1})=\{0\}$. An appeal to $(*_1)$ guarantees that $$f(s_0)J(R)^kg(t_{n+1})=\{0\}.\indent\indent \indent \indent\indent \indent \indent $$

Similarly, we can show that $$J(R)^kf(s_0)g(t_{n+1})=\{0\}.\indent\indent$$ Thus, the induction hypothesis is really true, and we must have $$f(s_0)J(R)^kg(t)=J(R)^kf(s_0)g(t)=\{0\} \text{for every}\ t\in Supp(g).\indent\indent (*_6)$$

\medskip

\textbf{Claim 2:} We assert that $J(R)^kf(s_1)g(t)=f(s_1)J(R)^kg(t)=\{0\}$ for every $t\in Supp(g)$. For proving that, we use an induction. In fact, as $fg(s_1t_0)=0$ in $R$, then $$\{0\}=J(R)^{k}(fg(s_1t_0))=J(R)^kf(s_1)\omega_{s_1}(g(t_0))+$$$$J(R)^k\sum_{(s,t)\in X_{s_1t_0},\ s\prec s_1}f(s)\omega_s(g(t))+J(R)^k\sum_{(s,t)\in X_{s_1t_0},\ t\prec t_0}f(s)\omega_s(g(t)).$$ In virtue of the $S$-compatibility of $R$, and in view of $(*_4)$ and $(*_6)$, we can get $$J(R)^k\sum_{(s,t)\in X_{s_0t_0},\ s\prec s_1}f(s)\omega_s(g(t))=J(R)^k\sum_{(s,t)\in X_{s_0t_0},\ t\prec t_0}f(s)\omega_s(g(t))=\{0\}.$$ So, one follows that $$J(R)^{k}(fg(s_1t_0) )=J(R)^kf(s_1)\omega_{s_1}(g(t_0))=\{0\}.$$ Since $R$ is $S$-compatible, we have $$J(R)^kf(s_1)g(t_0)=\{0\}.$$
Arguing as above, we obtain $$f(s_1)J(R)^kg(t_0)=\{0\}.$$ Thus, the base of induction is true.\\

Now, assume that $$J(R)^kf(s_1)g(t_i)=\{0\}=f(s_1)J(R)^kg(t_i)$$ for $0\leq i\leq n$, where $t_0\prec t_1\prec \cdots$. But, as $fg(s_1t_{n+1})=0$ in $R$, we infer that $$\{0\}=J(R)^{k}f(s_1)(fg(s_1t_{n+1}) )=J(R)^kf(s_1)^2\omega_{s_1}(g(t_{n+1}))+$$$$J(R)^kf(s_1)\sum_{(s,t)\in X_{s_1t_{n+1}},\ s\prec s_1}f(s)\omega_s(g(t))+J(R)^kf(s_1)\sum_{(s,t)\in X_{s_1t_{n+1}},\ t\prec t_{n+1}}f(s)\omega_s(g(t)).$$
Thus, both relations $J(R)^kf(s_1)\subseteq J(R)^k$ and $f(s_0)J(R)^kg(t)=f(s_0)J(R)^kg(t)=\{0\}$ for every $t\in Supp(g)$ lead us to $$J(R)^kf(s_1)\sum_{(s,t)\in X_{s_1t_{n+1}},\ s\prec s_1}f(s)\omega_s(g(t))=\{0\}.$$ If, for a moment, $f(s)\in J(R)$, then by $(*_1)$ we get $J(R)^kf(s_1)f(s)\omega_s(g(t))=\{0\}$ for every $t\in Supp(g)$. If, however, $f(s)\not\in J(R)$, then there exists a central idempotent $f\in R$ such that $f(s)=f+j$. So,  $$J(R)^kf(s_1)f(s)\omega_s(g(t_i))=J(R)^kf(s_1)(f+j)\omega_s(g(t_i)),$$ where $0\leq i\leq n$. Knowing $(*_1)$, it must be that $J(R)^kf(s_1)j\omega_s(g(t_i))=\{0\}$, and thus $$J(R)^kf(s_1)f(s)\omega_s(g(t_i))=J(R)^kf(s_1)(f+j)\omega_s(g(t_i))=$$$$J(R)^kf(s_1)f\omega_s(g(t_i))=
fJ(R)^kf(s_1)\omega_s(g(t_i))=\{0\},$$ where $0\leq i\leq n$. So, $J(R)^kf(s_1)f(s)\omega_s(g(t_i))=\{0\}$ for $0\leq i\leq n$, whence $$J(R)^kf(s_1)\sum_{(s,t)\in X_{s_1t_{n+1}},\ t\prec t_{n+1}}f(s)\omega_s(g(t))=\{0\}.$$ It, in turn, gives that $J(R)^kf(s_1)^2\omega_{s_1}(g(t_{n+1}))=\{0\}$. However, $R$ being $S$-compatible enables us that  $$J(R)^kf(s_1) ^2g(t_{n+1})=\{0\}.$$ From this and $(*_1)$, we write $$J(R)^k(f(s_1)+J(R))(f(s_1)+J(R))g(t_{n+1})=\{0\}.$$ Since $R$ is semi-regular ring, there exists an idempotent $f_1\in R$ such that $f(s_1)+J(R)=f_1+J(R)$. It, thereby, readily follows that $$\{0\}=J(R)^k(f_1+J(R))(f_1+J(R))g(t_{n+1})=$$$$J(R)^k(f_1+J(R))g(t_{n+1})=J(R)^k(f_(s_1)+J(R))g(t_{n+1}).$$ Further, in accordance with $(*_1)$, we get $$J(R)^kf(s_1)g(t_{n+1})=\{0\}.$$
In a way of similarity with the above argument, we also get $$f(s_1) J(R)^kg(t_{n+1})=\{0\}.$$ Thus, one sees that $$J(R)^kf(s_1) g(t)=f(s_1) J(R)^kg(t)=\{0\}\ \text{for every}\ t\in Supp(g).$$

Repeating the same procedure finitely many times, one easily obtains that $$J(R)^kf(s_i) g(t)=f(s_i) J(R)^kg(t)=\{0\}\ \text{for every}\ t\in Supp(g)\ \text{and}\ 0\leq i\leq m.$$
It, therefore, follows for every $t\in Supp(g)$ that $$J(R)^kg(t)\subseteq r_R(\{f(s_0),\cdots,f(s_m)\}).$$ But, with $(*_3)$ at hand, we receive $J(R)^kg(t)=\{0\}$ for every $t\in Supp(g)$, which is an obvious contradiction to $(*_4)$. So, after all considerations, there exists a maximal ideal $M$ of $R$ such that $C_f\subseteq M$, as expected, thus finishing the proof.
\end{proof}

The following five corollaries are immediate consequences of the preceding theorem.\\

\begin{corollary} Let $R$ be an $S$-compatible ring, $(S,\preceq)$ a strictly totally ordered monoid, and $\omega: S\rightarrow {\rm End}(R)$ a monoid homomorphism. If $R$ is abelian and semi-regular with $J(R)$ nilpotent, then the skew Malcev-Neumann series ring $R((S,\omega))$ is skew McCoy.
\end{corollary}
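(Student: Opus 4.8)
The plan is to observe that the skew Malcev--Neumann series ring is nothing more than a particular instance of the skew generalized power series ring, so that the corollary follows from Theorem~\ref{main} by direct specialization. As recorded in the introduction, when the strictly totally ordered monoid $(S,\preceq)$ is taken to be a (totally ordered) group, the construction $R[[S,\omega,\preceq]]$ coincides verbatim with the skew Malcev--Neumann series ring $R((S,\omega))$: the additive structure and the twisted convolution product are defined identically, and the admissible supports agree. The only bookkeeping worth isolating is this last point. Under a total order, any set of pairwise order-incomparable elements is a singleton, so the narrowness requirement is automatic; the surviving condition---that supports contain no infinite strictly descending chain---is precisely the well-ordered-support condition defining the Malcev--Neumann construction.

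First I would make this identification explicit and thereby read the skew McCoy property of $R((S,\omega))$ as the $(S,\omega)$-McCoy property of $R$ in the sense of Definition~\ref{skew McCoy}. Next I would check that the standing hypotheses transfer without change: $R$ is $S$-compatible, abelian, and semi-regular with $J(R)$ nilpotent, and $(S,\preceq)$ is a strictly totally ordered monoid. These are exactly the assumptions of Theorem~\ref{main}. Applying that theorem gives that $R$ is $(S,\omega)$-McCoy, which, under the identification above, is exactly the assertion that $R((S,\omega))$ is skew McCoy.

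I anticipate no substantive obstacle; this corollary is a straightforward specialization rather than a new argument. The single point demanding a moment's care is the verification that the Malcev--Neumann well-ordered-support condition matches the artinian-and-narrow condition used throughout the paper, and this reduces---as noted above---to the triviality of narrowness in the presence of a total order. Everything else is an immediate appeal to Theorem~\ref{main}.
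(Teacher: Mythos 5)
Your proposal is correct and matches the paper's intent exactly: the paper offers no separate argument, simply declaring this (and the four companion statements) to be immediate consequences of Theorem~\ref{main}, which is precisely the specialization you carry out. Your added remark that narrowness is automatic under a total order, so that the artinian-and-narrow support condition collapses to the well-ordered-support condition of the Malcev--Neumann construction, is the right (and only) point needing verification.
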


\begin{corollary}
Let $R$ be a ring and $\alpha$ a compatible automorphism of $R$. If $R$ is abelian and semi-regular with $J(R)$ nilpotent, then the skew Laurent series ring $R[[x, x^{-1}; \alpha]]$ is skew McCoy.
\end{corollary}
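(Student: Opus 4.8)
The plan is to recognize the skew Laurent series ring $R[[x,x^{-1};\alpha]]$ as a particular instance of the skew generalized power series ring construction, and then to apply Theorem~\ref{main} directly. Concretely, I would take $S=(\Z,+)$ equipped with its usual total order $\leq$ and define $\omega:\Z\rightarrow{\rm End}(R)$ by $\omega_n=\alpha^n$; this is well defined for negative $n$ precisely because $\alpha$ is an automorphism, and it is a monoid homomorphism since $\alpha^{m+n}=\alpha^m\alpha^n$. The ordered monoid $(\Z,\leq)$ is evidently strictly totally ordered.

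First I would establish the identification $R[[\Z,\omega,\leq]]=R[[x,x^{-1};\alpha]]$. For any $f:\Z\rightarrow R$, the support $Supp(f)$ is automatically narrow because $\leq$ is a total order, so that anti-chains can only be singletons; and $Supp(f)$ is artinian exactly when it is bounded below. Thus the artinian-and-narrow requirement collapses to ``bounded below,'' which is precisely the condition defining a skew Laurent series $\sum_{i\geq n}f(i)x^i$. A short check then shows that the twisted convolution $fg(n)=\sum_{u+v=n}f(u)\omega_u(g(v))=\sum_{u+v=n}f(u)\alpha^u(g(v))$ agrees with the skew Laurent multiplication determined by $x^u a=\alpha^u(a)x^u$, so the two rings coincide.

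Next I would confirm the $S$-compatibility hypothesis. Since $\alpha$ is a compatible automorphism, one has $ab=0$ if, and only if, $a\alpha(b)=0$; iterating this equivalence yields it for every nonnegative power, and invertibility of $\alpha$ extends it to negative powers as well (because $a\alpha^{-1}(b)=0$ iff $a\alpha(\alpha^{-1}(b))=ab=0$). Hence $ab=0$ iff $a\omega_n(b)=0$ for all $n\in\Z$, which is exactly $S$-compatibility of $R$. The remaining hypotheses of Theorem~\ref{main}---that $R$ is abelian and semi-regular with $J(R)$ nilpotent---are assumed outright.

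With all hypotheses of Theorem~\ref{main} in place, I would conclude that $R$ is $(\Z,\omega)$-McCoy. Unwinding Definition~\ref{skew McCoy} under the identification above, this says that any relation $fg=0$ with nonzero $f,g\in R[[x,x^{-1};\alpha]]$ forces $fc=0$ for some nonzero constant $c\in R$, and symmetrically on the left, which is precisely the assertion that the skew Laurent series ring $R[[x,x^{-1};\alpha]]$ is skew McCoy. The only step demanding genuine care is the identification itself---checking that the artinian-and-narrow support condition reduces to ``bounded below'' over $\Z$ and that $\alpha$-compatibility propagates through all integer powers---but both are routine consequences of the order being total and of $\alpha$ being invertible, so no real obstacle remains once the specialization is correctly set up.
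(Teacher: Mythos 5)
Your proposal is correct and matches the paper's intent exactly: the paper offers no separate argument, presenting this corollary as an immediate specialization of Theorem~\ref{main} to $S=(\Z,+)$ with its usual total order and $\omega_n=\alpha^n$. Your verifications---that the artinian-and-narrow support condition over $\Z$ reduces to ``bounded below,'' and that compatibility of $\alpha$ propagates to all integer powers so that $R$ is $S$-compatible---are precisely the routine checks the paper leaves implicit.
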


\begin{corollary}
Let $R$ be an $S$-compatible ring, $(S,\preceq)$ a strictly totally ordered monoid. If $R$ is abelian and semi-regular with $J(R)$ nilpotent, then the generalized power series ring $R[[S]]$ is skew McCoy.
\end{corollary}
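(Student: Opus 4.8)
The plan is to derive this corollary as the untwisted specialization of Theorem~\ref{main}. By definition the generalized power series ring $R[[S]]$ is nothing other than $R[[S,1,\preceq]]$, where $1\colon S\rightarrow {\rm End}(R)$ is the trivial monoid homomorphism sending every element of $S$ to the identity endomorphism $\mathrm{id}_R$. So the entire task reduces to checking that Theorem~\ref{main} applies verbatim with $\omega=1$ and that its conclusion unwinds to the asserted McCoy property of $R[[S]]$.

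First I would observe that the $S$-compatibility hypothesis is automatic in this setting. Compatibility asserts $ab=0$ if, and only if, $a\omega_s(b)=0$ for all $a,b\in R$ and $s\in S$; when $\omega_s=\mathrm{id}_R$ this is the tautology $ab=0 \Leftrightarrow ab=0$, so every ring is $S$-compatible with respect to the trivial action. The remaining hypotheses of Theorem~\ref{main}, namely that $(S,\preceq)$ is a strictly totally ordered monoid and that $R$ is abelian and semi-regular with $J(R)$ nilpotent, are carried over unchanged. Thus all the assumptions of Theorem~\ref{main} are met with $\omega=1$.

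Next I would unwind what the conclusion says. According to Definition~\ref{skew McCoy}, $R$ being right $(S,1)$-McCoy means that whenever $fg=0$ for non-zero $f,g\in R[[S]]$, there exists a non-zero $c\in R$ with $f(s)\,\omega_s(c)=f(s)\,c=0$ for every $s\in S$, that is, $f\,c_c=0$ in $R[[S]]$. This is precisely the right McCoy condition for the generalized power series ring, and symmetrically on the left; hence $R[[S]]$ is skew McCoy. Because every step is a direct substitution, there is no genuine obstacle here: the only point requiring a moment of care is to confirm that the trivial action renders compatibility vacuous (so the compatibility clause in the statement could in fact be omitted), and that the $(S,1)$-McCoy property coincides literally with the McCoy property of $R[[S]]$ as phrased above.
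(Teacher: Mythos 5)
Your proposal is correct and matches the paper's intent exactly: the paper presents this corollary (with its four companions) as an immediate specialization of Theorem~\ref{main}, obtained by taking $\omega$ to be the trivial homomorphism $1\colon S\rightarrow {\rm End}(R)$, under which $S$-compatibility is vacuous and $R[[S,1,\preceq]]=R[[S]]$. Your additional observation that the compatibility hypothesis could therefore be dropped from the statement is a fair (and accurate) remark.
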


\begin{corollary} Let $R$ be an abelian and semi-regular ring with $J(R)$ nilpotent. Then, the Malcev-Neumann series ring $R((S))$ is McCoy.
\end{corollary}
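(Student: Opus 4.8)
The plan is to recognize this corollary as the untwisted specialization of Theorem~\ref{main}, obtained by taking $\omega$ to be the trivial monoid homomorphism $1\colon S\rightarrow \mathrm{End}(R)$ that sends every element of $S$ to the identity endomorphism. The (untwisted) Malcev-Neumann series ring $R((S))$ is precisely the skew generalized power series ring $R[[S,1,\preceq]]$, where $S$ is a totally ordered group viewed as a strictly totally ordered monoid and the artinian-narrow support condition reduces to the usual well-ordered support. In this setting, the assertion that $R((S))$ is McCoy is exactly the assertion that $R$ is $(S,1)$-McCoy in the sense of Definition~\ref{skew McCoy}.

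The only hypothesis of Theorem~\ref{main} not already assumed here is $S$-compatibility, so the single point to verify is that $S$-compatibility holds automatically once $\omega=1$. This is immediate: since $\omega_s=\mathrm{id}_R$ for every $s\in S$, we have $a\,\omega_s(b)=ab$ for all $a,b\in R$, and hence the equivalence $ab=0 \iff a\,\omega_s(b)=0$ is trivially satisfied for every $s\in S$. Thus every ring is $S$-compatible with respect to the trivial action, and, in particular, our abelian semi-regular ring $R$ with $J(R)$ nilpotent is $S$-compatible.

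With this observation in hand, all the hypotheses of Theorem~\ref{main} are met for the data $\bigl(R,(S,\preceq),\omega=1\bigr)$: the monoid $S$ is strictly totally ordered, $\omega=1$ is a monoid homomorphism, $R$ is $S$-compatible, abelian and semi-regular, and $J(R)$ is nilpotent. Theorem~\ref{main} therefore yields that $R$ is $(S,1)$-McCoy, which is exactly the claim that $R((S))$ is McCoy. I expect the only real obstacle to be the bookkeeping of these identifications --- confirming that the untwisted Malcev-Neumann construction is the case $\omega=1$ of the skew generalized power series ring and that compatibility becomes vacuous there; once these are settled, the corollary is an immediate application of Theorem~\ref{main} with no further computation required.
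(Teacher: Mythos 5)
Your proposal is correct and matches the paper's (implicit) argument: the paper lists this corollary as an immediate consequence of Theorem~\ref{main}, obtained exactly as you describe by specializing to the trivial homomorphism $\omega=1$, for which $S$-compatibility is vacuous and $R[[S,1,\preceq]]$ is the Malcev--Neumann series ring $R((S))$. Your extra care in checking that the well-ordered-support condition of the Malcev--Neumann construction agrees with the artinian-and-narrow condition over a totally ordered group is a welcome detail the paper leaves unstated.
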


\begin{corollary}
Let $R$ be an abelian and semi-regular ring with $J(R)$ nilpotent. Then, the Laurent series ring $R[[x, x^{-1}]]$ is  McCoy.
\end{corollary}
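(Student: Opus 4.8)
The plan is to realize the Laurent series ring $R[[x,x^{-1}]]$ as a special instance of the skew generalized power series construction and then invoke Theorem~\ref{main} directly. Concretely, I would take $S=\mathbb{Z}$, regarded as a monoid under addition (written multiplicatively) and equipped with its usual total order $\leq$; this $(\mathbb{Z},\leq)$ is a strictly totally ordered monoid, since the order is total and $s_1<s_2$ forces $s_1+t<s_2+t$ for every $t$. For the twist I would take $\omega:\mathbb{Z}\rightarrow{\rm End}(R)$ to be the trivial homomorphism $1$, so that $\omega_s={\rm id}_R$ for all $s\in\mathbb{Z}$.

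The first step is to check that, under these choices, $R[[\mathbb{Z},1,\leq]]$ coincides with the formal Laurent series ring $R[[x,x^{-1}]]$. Since $\mathbb{Z}$ is totally ordered, every antichain is a singleton, so the narrowness condition on supports is automatic, while the artinian condition says precisely that the support is bounded below, i.e.\ only finitely many negative powers of $x$ occur; thus the underlying function-space is exactly the set of formal Laurent series. Because $\omega$ is trivial, the convolution product $fg(s)=\sum_{u+v=s}f(u)\omega_u(g(v))$ collapses to the ordinary Cauchy product $\sum_{u+v=s}f(u)g(v)$, which is the multiplication of $R[[x,x^{-1}]]$. Hence the two rings are literally identical.

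The second step is to verify the hypotheses of Theorem~\ref{main}. The ring $R$ is assumed abelian and semi-regular with $J(R)$ nilpotent, matching three of the four hypotheses verbatim; for $S$-compatibility, note that with $\omega_s={\rm id}_R$ the defining equivalence $ab=0\Leftrightarrow a\omega_s(b)=0$ degenerates to the tautology $ab=0\Leftrightarrow ab=0$, so $R$ is $S$-compatible for free. Theorem~\ref{main} therefore applies and shows $R$ is $(S,\omega)$-McCoy. Finally, for the trivial twist the $(S,\omega)$-McCoy property is exactly the ordinary McCoy property of $R[[x,x^{-1}]]$: the condition $f(s)\cdot\omega_s(c)=0$ for all $s$ becomes $f(s)\cdot c=0$ for all $s$, i.e.\ $fc=0$ with $0\neq c\in R$, and since Theorem~\ref{main} delivers both the left and right versions the ring is McCoy, as claimed. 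There is no genuine obstacle here; the only point requiring care is matching the artinian/narrow support conditions of the general construction to the classical description of Laurent series, which is the routine verification carried out in the first step.
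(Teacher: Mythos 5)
Your proposal is correct and matches the paper's (implicit) argument: the paper derives this corollary by specializing Theorem~\ref{main} to $S=\mathbb{Z}$ with the usual order and the trivial twist, exactly as you do. Your verification that artinian-and-narrow supports in $(\mathbb{Z},\leq)$ are precisely the subsets bounded below, and that $S$-compatibility is automatic for the identity endomorphism, correctly supplies the routine details the paper leaves unstated.
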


Let $R$ be a ring. Then, we define the matrix ring
$$S_n(R)=\left\{\left(
                                                   \begin{array}{cccc}
                                                     a_{11} & a_{12} & \cdots & a_{1n} \\
                                                     0 & a_{11} & \ddots & \vdots \\
                                                     \vdots & \vdots & \ddots & a_{12} \\
                                                     0 & 0 & \cdots & a_{11} \\
                                                   \end{array}
                                                 \right)|\ a_{ij}\in R,\ 1\leq i,j\leq n.
\right\}.$$

It is worthwhile noticing that every abelian semi-primary ring satisfies the conditions of our main result Theorem \ref{main}. We, thereby, come to the following affirmation.

\begin{example} The ring $S_n(R)$ is abelian semi-regular with nilpotent Jacobson radical (respectively, is abelian semi-primary) if, and only if, so is the ring $R$.
\end{example}

\begin{proof} Since one verifies that the isomorphism $R/J(R)\simeq S_n(R)/J(S_n(R))$ holds, we conclude that $R/J(R)$ is regular (semi-simple) and $J(R)$ is nilpotent if, and only if, $S_n(R)/J(S_n(R))$ is regular (semi-simple) and $J(S_n(R))$ is nilpotent. Note that the sum

\medskip

$$\left(
                                                   \begin{array}{cccc}
                                                     e_{11} & e_{12} & \cdots & e_{1n} \\
                                                     0 & a_{11} & \ddots & \vdots \\
                                                     \vdots & \vdots & \ddots & e_{12} \\
                                                     0 & 0 & \cdots & e_{11} \\
                                                   \end{array}
                                                 \right)+J(S_n(R))$$

                                                 \medskip
is an idempotent of $S_n(R)/J(S_n(R))$ if, and only if, the sum $e_{11}+J(R)$ is an idempotent of $R/J(R)$. So, all idempotents in $R$ can be lifted modulo $J(R)$ if, and only if, all idempotents in $S_n(R)$ can be lifted modulo $J(S_n(R))$. But since, apparently, $R$ is abelian if, and only if, so is $S_n(R)$, we establish the stated assertion.
\end{proof}

We close our work with the following challenging question.

\medskip

\noindent{\bf Problem.} Is it true that Theorem~\ref{main} remains absolutely true if we replace the condition on $R$ to be "abelian" to the more general restriction of being "2-primal"?

\medskip




\vskip3.0pc

\end{document}